\newcounter{minutes}\setcounter{minutes}{\time}
\newcounter{hours}\setcounter{hours}{\time}
\date{}
\newfont{\cyrilic}{wncyr10 scaled 1000}
\title{On functional inequalities for the psi function}
\author[B. A. Bhayo]{Barkat Ali Bhayo}
\address{Department of Mathematics \& Statistics,
Quaid-e-Awam University of Engineering Science \& Technology Nawabshah, Pakistan}
\email{bhayo.barkat@gmail.com}
\newcommand{\comment}[1]{}
\theoremstyle{plain}
\newtheorem{theorem}[equation]{Theorem}
\newtheorem{lemma}[equation]{Lemma}
\newtheorem{corollary}[equation]{Corollary}
\newtheorem{subsec}[equation]{}
\numberwithin{equation}{section}
\begin{document}
\font\fFt=eusm10 
\font\fFa=eusm7  
\font\fFp=eusm5  
\def\K{\mathchoice
{\hbox{\,\fFt K}}
{\hbox{\,\fFt K}}
{\hbox{\,\fFa K}}
{\hbox{\,\fFp K}}}

\def\thefootnote{}
\footnotetext{ \texttt{\tiny File:~\jobname .tex,
          printed: \number\year-\number\month-\number\day,
}} \makeatletter\def\thefootnote{\@arabic\c@footnote}\makeatother
\maketitle

\begin{abstract} In this note we study the monotonicity of the function $x\mapsto\psi(1+bx)^a/\psi(1+ax)^b$. We also
give the several inequalities involving the psi function, whic is the logarithmic derivative of the gamma function.
\end{abstract}
\bigskip
{\bf 2010 Mathematics Subject Classification:} 33B15, 26D15.

{\bf Keywords and phrases}: Gamma function, polygamma function, inequalities.
\bigskip

\section{Introduction}
For ${\rm Re}\, x>0$, we define the classical \emph{gamma function} $\Gamma(x)$ and the $\emph{psi function}$ $\psi(x)$ by
$$\Gamma(x)=\int^\infty_0 e^{-t}t^{x-1}\,dt,\,\,\psi(x)=\frac{\Gamma^{'}(x)}{\Gamma(x)},$$
respectively. The recurrence relations of $\Gamma$ and $\psi$ are
$$\Gamma(1+x)=x\Gamma(x),\quad \psi(x+1) = \frac{1}{x} + \psi(x).$$
Note that
$$\psi(1)=-\gamma\quad {\rm and}\quad \psi(1/2)=-2\log 2-\gamma$$
where $\gamma$ is the $\emph{Euler-Mascheroni}$ constant.
Throughout this paper, we denote by $c = 1.461632144968362\ldots$
the only positive root of the the equation  $\psi(x) = 0 $ (see \cite[6.3.19]{as}).

In 2006, C. Alsina and M.S. Tomas \cite{at} prove the following interesting inequality involving the gamma function

\begin{equation}\label{aliseq}
\frac{1}{n!}\leq \frac{\Gamma(1+x)^n}{\Gamma(1+nx)},\quad x\in[0,1],\,n=1,2,\ldots,
\end{equation}
by using a geometrical method. Motivated by their result, J. S\'andor \cite{s} extended the inequality \eqref{aliseq}
as follows:
$$\frac{1}{\Gamma(1+a)}\leq \frac{\Gamma(1+x)^a}{\Gamma(1+ax)},\quad x\in[0,1],\,a\geq 1.$$
In \cite{me}, Mercer obtained the following inequalities
$$\frac{\Gamma(1+x)^a}{\Gamma(1+ax)}< \frac{\Gamma(1+y)^a}{\Gamma(1+ay)},\quad a\in(0,1),$$
$$\frac{\Gamma(1+x)^a}{\Gamma(1+ax)}> \frac{\Gamma(1+y)^a}{\Gamma(1+ay)},\quad a\in\mathbb{R}\setminus(0,1),$$
with $0<x<y,\,1+ax>0$, and $1+ay>0$. In 2006, Bougotta \cite{bo} proved the monotonicity property of
$x\mapsto\Gamma(1+bx)^a/\Gamma(1+ax)^b$, by using the same method S\'andor \cite{s}. It was pointed out by the referee of this paper pointed that the inequalities of S\'andor \cite{s} and Mercer \cite{me} follow from the inequality (2.4)
in \cite{ne2}. For the related inequalities of the gamma function, we refer the reader to see \cite{nn}.


Our first result is the counterpart of the above results and reads as follows.
\begin{theorem}\label{r1} For $a,b>1$, the following function
$$f(x)=\frac{\psi(1+bx)^a}{\psi(1+ax)^b}$$
is increasing for $a>b$ and decreasing for $a<b$ in $(c-1,\infty)$, respectively.

In particular, for $1<b<a$
$$\displaystyle\left(\frac{\psi(1+bx)}{\psi(1+b(c-1))}\right)^a >\left(\frac{\psi(1+ax)}{\psi(1+a(c-1))}\right)^b,$$
and the reverse inequality holds for $1<a<b$.
\end{theorem}

\begin{theorem}\label{r4} The function $g(x)=1/\psi(\cosh(x))$ is decreasing and convex from $(c_1,\infty)$
onto $(1/\psi(\cosh(c_1)),0)$, where $c_1={\rm arccosh}(c)=0.92728\ldots$. In particular,
$$\frac{2\psi(r)\psi(s)}{\psi((\sqrt{(1+r)(1+s)}+\sqrt{(r-1)(s-1)})/2)}
\leq \psi(r)+\psi(s),$$
for all $r,s\in(c,\infty)$, equality holds for $r=s$.
\end{theorem}



\section{Preliminaries and proofs}

The following lemma will be used in our proofs, which can be found in \cite{al,el, alzer, egp, sz}.
\begin{lemma}\label{lem1} For $x>0$ we have\\

\begin{enumerate}

\item $\displaystyle\log x-\frac{1}{x}<\psi(x)<\log x-\frac{1}{2x}$,\\

\item $\displaystyle\psi'(x)>\frac{1}{x}+\frac{1}{2x^2}$,\\

\item $\displaystyle\psi''(x)<\frac{1}{x}-2\psi'(x)$,\\

\item $(\psi'(x))^2+\psi''(x)>0$,\\

\item $2\psi'(x)+x\psi''(x)<\displaystyle\frac{1}{x}$.
\end{enumerate}
\end{lemma}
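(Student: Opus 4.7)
Since the five inequalities compiled in the lemma are classical and drawn from the cited works \cite{al,el,alzer,egp}, the plan is to verify each by the shortest route through the standard integral and series representations
\begin{equation*}
\psi'(x) = \int_0^\infty \frac{t\,e^{-xt}}{1-e^{-t}}\,dt = \sum_{n=0}^\infty \frac{1}{(n+x)^2},\qquad \psi''(x) = -\int_0^\infty \frac{t^2\,e^{-xt}}{1-e^{-t}}\,dt,
\end{equation*}
together with Binet's second formula
\begin{equation*}
\psi(x) = \log x - \frac{1}{2x} - \int_0^\infty \left(\frac{1}{e^t-1} - \frac{1}{t} + \frac{1}{2}\right)e^{-xt}\,dt
\end{equation*}
and the recurrences $\psi'(x+1) = \psi'(x) - 1/x^2$, $\psi''(x+1) = \psi''(x) + 2/x^3$.

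For (1), the upper bound reads off Binet's formula (positive integrand). For the lower bound, set $F(x) = \psi(x) - \log x + 1/x$; then $F(\infty)=0$ and
\begin{equation*}
F'(x) = \sum_{n\ge 1}\frac{1}{(n+x)^{2}} - \frac{1}{x} < \int_0^\infty \frac{dt}{(t+x)^2} - \frac{1}{x} = 0,
\end{equation*}
so $F>0$ on $(0,\infty)$. For (2), the elementary Taylor estimate $t/(1-e^{-t}) > 1 + t/2$ for $t>0$, integrated against $e^{-xt}$, yields $\psi'(x) > 1/x + 1/(2x^2)$. For (3) I would let $G(x)=1/x - 2\psi'(x) - \psi''(x)$, verify $G(\infty)=0$, and show $G$ is decreasing on $(0,\infty)$ using the series expansions of $\psi''$ and $\psi'''$ combined with the telescoping identity $1/x^2 = \sum_{n\ge 0}(2n+2x+1)/[(n+x)^2(n+x+1)^2]$.

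The substantive step is (4). Setting $u(x) := \psi'(x)^2 + \psi''(x)$ and applying the two recurrences above gives
\begin{equation*}
u(x+1) - u(x) = -\frac{2\psi'(x)}{x^2} + \frac{2}{x^3} + \frac{1}{x^4}.
\end{equation*}
Substituting the bound $\psi'(x) > 1/x + 1/(2x^2)$ from (2) into the right-hand side produces $u(x+1) < u(x)$ for every $x>0$. On the other hand, the Stirling-type expansions $\psi'(x) = 1/x + 1/(2x^2) + 1/(6x^3) + O(x^{-5})$ and $\psi''(x) = -1/x^2 - 1/x^3 - 1/(2x^4) + O(x^{-6})$ yield $u(x) = 1/(12 x^4) + O(x^{-5}) \to 0$. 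Hence, for any fixed $x_0>0$, the sequence $\{u(x_0+n)\}_{n\ge 0}$ is strictly decreasing with limit $0$; since a strictly decreasing real sequence with limit $0$ must consist entirely of positive terms, $u(x_0)>0$.

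Finally (5) drops out of (4): the derivative of $g(x) := \psi'(x)\,e^{\psi(x)}$ is $(\psi'(x)^2 + \psi''(x))\,e^{\psi(x)} > 0$, so $g$ is strictly increasing, and $g(x)\to 1$ as $x\to\infty$ (from $\psi'(x)\sim 1/x$ and $e^{\psi(x)}\sim x$), whence $g(x)<1$ throughout $(0,\infty)$. The main obstacle is (4), which requires combining the recurrence trick on integer translates with the asymptotic $u(x)\to 0$ at infinity; (3) is also mildly delicate, as naive termwise comparisons in the series fail to give the required sign. The remaining items are essentially bookkeeping on the integral and series representations.
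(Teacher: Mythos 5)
Your treatments of (1), (2), (4) and (5) are sound. The paper itself gives no proof of this lemma --- it is imported from the cited references --- so there is nothing in-text to compare against; but your chain for (4)--(5), using the recurrences $\psi'(x+1)=\psi'(x)-1/x^2$ and $\psi''(x+1)=\psi''(x)+2/x^3$ to get $u(x+1)<u(x)$ from part (2), combined with $u(x)=\tfrac{1}{12}x^{-4}+O(x^{-5})\to 0$, is exactly the standard argument and is complete, and (1)--(2) follow correctly from the integral representations as you describe.

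The genuine gap is in (3), and it is not merely the ``mild delicacy'' you flag: the inequality $\psi''(x)<1/x-2\psi'(x)$ is false for moderate and large $x$, so no proof on all of $(0,\infty)$ exists. Concretely, at $x=2$ one has $\psi''(2)=-2(\zeta(3)-1)\approx -0.404$, while $1/2-2\psi'(2)=1/2-2(\pi^2/6-1)\approx -0.790$, so the claimed inequality fails there (and already at $x=1.5$). Asymptotically, $G(x)=1/x-2\psi'(x)-\psi''(x)=-1/x+\tfrac{2}{3}x^{-3}+O(x^{-4})$, so although $G(\infty)=0$ as you say, $G$ is negative for all large $x$ and increases to $0$ from below; your plan of showing $G$ decreasing with limit $0$ therefore cannot be carried out, and the termwise comparison you observe to ``fail'' fails precisely because the statement itself is false beyond a small neighbourhood of the origin (numerically, (3) holds only for $x\lesssim 1.2$). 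A correct write-up must either restrict the range of $x$ in (3), or replace it by whatever inequality the cited sources actually prove; as it stands, this part of your proposal proves a false statement and the step ``$G$ is decreasing on $(0,\infty)$'' is the point where it breaks.
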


\begin{subsec}{\bf Proof of Theorem \ref{r1}.} \rm Let $$g(x)=\log f(x)=a\log (\psi(1+bx))-b\log(\psi(1+ax)).$$
Differentiating $g$ with respect to $x$ we get
$$g'(x)=ab\left(\frac{\psi'(1+bx)}{\psi(1+bx)}-\frac{\psi'(1+ax)}{\psi(1+ax)}\right).$$
It is easy to see that the function $\psi'(z)/\psi(z)$ is positive and decreasing for $z\in(c,\infty)$.
This implies that $g'(x)$ for $x>c-1$ is positive (negative) when $1<b<a\,(1<a<b)$. The proof follows from this observation.
$\hfill\square$
\end{subsec}

\begin{lemma}\label{lem4b} The function
$$f(x)=\frac{\psi '(\cosh (x)) \sinh (x)}{\psi (\cosh (x))^2},$$
is decreasing in $x\in(c_1,\infty)$.
\end{lemma}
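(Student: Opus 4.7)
The plan is to differentiate $f$ and, after clearing denominators, reduce the sign of $f'(x)$ to an algebraic inequality handled by Lemma~\ref{lem1}. Put $y=\cosh(x)$. For $x>1$ one has $y>\cosh(1)\approx 1.543>c$, hence $\psi(y)>0$ since $\psi$ is increasing and $\psi(c)=0$; also $\psi'(y)>0$ and $\sinh(x)>0$. Differentiating and using $\sinh^{2}(x)=y^{2}-1$ yields
\[
f'(x)\,\psi(y)^{3}=(y^{2}-1)\psi''(y)\psi(y)+y\,\psi'(y)\psi(y)-2(y^{2}-1)\psi'(y)^{2},
\]
so since $\psi(y)^{3}>0$ it is enough to show that the right-hand side is negative for $y>\cosh(1)$.

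The next step is to use Lemma~\ref{lem1}(3), $\psi''(y)<1/y-2\psi'(y)$, to eliminate $\psi''$. Multiplying by $(y^{2}-1)\psi(y)>0$ and substituting into the displayed expression, it suffices to prove
\[
\frac{(y^{2}-1)\psi(y)}{y}\le(2y^{2}-y-2)\psi'(y)\psi(y)+2(y^{2}-1)\psi'(y)^{2}.
\]
Dividing by $\psi(y)\psi'(y)>0$ and invoking $y\psi'(y)>1$ (a consequence of Lemma~\ref{lem1}(2)) to bound $(y^{2}-1)/(y\psi'(y))<y^{2}-1$, the problem reduces further to
\[
(1+y-y^{2})\psi(y) < 2(y^{2}-1)\psi'(y). \qquad(\star)
\]

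To close the argument I split on the sign of $1+y-y^{2}$. For $y\ge\phi:=(1+\sqrt{5})/2$, $(\star)$ is immediate since the left-hand side is non-positive while the right-hand side is strictly positive. For $\cosh(1)<y<\phi$, I apply the crude bounds $\psi(y)<\log y\le y-1$ (from Lemma~\ref{lem1}(1) combined with the elementary $\log y\le y-1$) and $\psi'(y)>1/y$, reducing $(\star)$ to $(1+y-y^{2})(y-1)<2(y^{2}-1)/y$. Cancelling the positive factor $y-1$ and multiplying by $y$, this becomes $y^{3}-y^{2}+y+2>0$, which is immediate for $y\ge 1$ since $y^{2}(y-1)\ge 0$ gives $y^{3}-y^{2}+y+2\ge y+2>0$.

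The main technical obstacle is the narrow subinterval $\cosh(1)<y<\phi\approx 1.618$, in which the coefficient $1+y-y^{2}$ remains positive so that the trivial sign comparison for $(\star)$ breaks down. The reduction above shows that even on this subinterval the inequalities of Lemma~\ref{lem1} leave ample margin, after which the polynomial inequality finishes the job; outside this subinterval $(\star)$ is automatic.
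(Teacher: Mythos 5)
Your proof is correct, and it takes a genuinely different route from the paper's. Both arguments begin by reducing the sign of $f'$ to the sign of $(y^2-1)\psi''(y)\psi(y)+y\,\psi'(y)\psi(y)-2(y^2-1)\psi'(y)^2$ with $y=\cosh(x)$, but the paper then disposes of the $\psi'(y)^2$ term in one stroke via Lemma \ref{lem1}(4) (replacing $-(y^2-1)\psi'(y)^2$ by the larger quantity $(y^2-1)\psi''(y)$) and simply asserts that what remains is negative. You instead eliminate $\psi''$ via Lemma \ref{lem1}(3), divide through by $\psi(y)\psi'(y)>0$, and reduce everything to the scalar inequality $(1+y-y^2)\psi(y)<2(y^2-1)\psi'(y)$, which you settle by a sign split at $(1+\sqrt{5})/2$ together with the elementary bounds $\psi(y)<\log y\le y-1$ and $\psi'(y)>1/y$. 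What your approach buys is completeness: the paper's final negativity claim requires the term $\psi(r)\bigl(r\psi'(r)+(r^2-1)\psi''(r)\bigr)$ to be dominated, which does not follow from the quoted lemma alone (and the paper's displayed derivative also has minor slips in the power of $\psi(r)$ and the coefficient of $\psi'(r)^2$), whereas every link in your chain is justified, including the delicate subinterval $\cosh(1)<y<(1+\sqrt{5})/2$ where the coefficient $1+y-y^2$ is still positive. The price is a longer argument with an explicit case analysis and a closing polynomial verification.
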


\begin{proof} Letting $r=\cosh(x)$,
$$f(x)=\frac{\sqrt{r^2-1}\,\psi'(r)}{(\psi(r))^2}.$$
Differentiating $f$ with respect $x$ we get by Lemma \ref{lem1} (4) and (5) 
\begin{eqnarray*}
f'(x)&=&\frac{-2(r^2-1) \psi '(r)^2+\psi (r) \left(r \psi '(r)+\left(r^2-1\right) \psi ''(r)\right)}{
   \psi (r)^3}\\
   &<&\frac{2(r^2-1) \psi ''(r)+\psi (r) \left(r \psi '(r)+\left(r^2-1\right) \psi ''(r)\right)}{
   \psi (r)^3}\\
    &=&\frac{(r^2-1) \psi ''(r)(2+\psi(r))+r\psi(r)\psi'(r)}{\psi (r)^3}\\
    &<&\frac{(r-1/r) (1/r-2\psi '(r))(2+\psi(r))+r\psi(r)\psi'(r)}{\psi (r)^3}=\frac{f_1(x)}{\psi (r)^3}.
\end{eqnarray*}
Clearly $\psi(r)^3$ is positive because $r>c$, in order to show that $f_1$ is negative or equivalently
$$(r-1/r) (1/r-2\psi '(r))(2+\psi(r))<-r\psi(r)\psi'(r),$$
it is enought to prove that
\begin{equation}\label{neq1}
\frac{1}{r}-2\psi'(r)<-\psi'(r),
\end{equation}
and
\begin{equation}\label{neq2}
(r-1/r)(2+\psi(r))>r\psi(r).
\end{equation}
The inequality \eqref{neq1} is valid and follows from Lemma \ref{lem1}(2). To prove inequality \eqref{neq2},
we get by Lemma \ref{lem1}(1)
$$\frac{r^2-1+(r^2-2)\psi(r)}{r}>\frac{r^2-1+(r^2-2)(\log r-1/r)}{r}=\frac{f_2(r)}{r}.$$
The function $f_2$ is positive, because
$$f_2'(r)=2r\log r+3r-1-2/r^2-2/r>0,$$
and $\lim_{r\to c}f_2(r)=2.18993\ldots$. Hence $f'$ is negative, and consequently $f$ is decreasing, this completes the proof.
\end{proof}

\begin{subsec}{\bf Proof of Theorem \ref{r4}.}
\rm Differentiating $g$ with respect to $x$ we get
$$g'(x)=-\frac{\psi '(\cosh (x)) \sinh (x)}{\psi (\cosh (x))^2},$$
which is negative and decreasing, hence $g$ is convex. This implies that
$$\frac{1}{2}\left(\frac{1}{\psi(\cosh(x))}+\frac{1}{\psi(\cosh(y))}\right)\leq \frac{1}{\psi(\cosh(x+y)/2)}$$
$$=1/\psi\left(\frac{\sqrt{(\cosh(x)+1)(\cosh(y)+1)}}{2}
+\frac{\sqrt{(\cosh(x)-1)(\cosh(y)-1)}}{2}\right).$$
Setting $r=\cosh(x)$ and $s=\cosh(y)$ we complete the proof.
$\hfill\square$
\end{subsec}

Before we formulate the following result, we denote $z'=\sqrt{1-z^2},\,z\in(0,1)$.
The functional inequality of the following corollary is reminiscent of Theorem 5.12 of \cite{avvb}.

\begin{corollary} The following inequality 
$$\psi(r)+\psi(s)\leq 2\psi\left(\sqrt{\frac{2rs}{1+rs+r's'}}\right)$$
holds for $r,s\in(0,1)$, with equality for $r=s$.
\end{corollary}

\begin{proof} Let $h=\psi(1/\cosh(x))$ for $x>0$. We get
$$h'(x)=-\psi'(1/\cosh(x))\frac{\tanh(x)}{\cosh(x)}=-\tanh(x)h_1(x).$$ Let $u=1/\cosh(x)$,
one has 
$$h_1'(x)=-uu'(\psi'(u)+r\psi''(u)),$$
which is positive by Lemma \ref{lem1}. Thus, $h_1$ is increasing, and also 
$\tanh(x)$ is increasing.  Clearly, $h'$ is decreasing and negative, hence $h$ is concave in $x>0$. This implies 
$$\psi(1/\cosh((x+y)/2))\geq \frac{\psi(1/\cosh(x))+\psi(1/\cosh(y))}{2}.$$
The desired inequality follows if we let $r=1/\cosh(x),s=1/\cosh(y)$ and use the identity $\cosh^2((x+y)/2)=(1+xy+x'y')/(2 xy)$.
\end{proof}

For convenience we use the notation $\mathbb{R}_+=(0,\infty)$.
\begin{lemma}\label{neu}\cite[Thm 2.1]{ne2} Let $f:\mathbb{R}_+\to \mathbb{R}_+$
be a differentiable, log-convex function and let $a\geq 1$. Then $g(x)=(f(x))^a/f(a\,x)$
 decreases on its domain. In particular, if $0\leq x\leq y\,,$ then the following inequalities
 $$\frac{(f(y))^a}{f(a\,y)}\leq\frac{(f(x))^a}{f(a\,x)}\leq (f(0))^{a-1}$$
 hold true. If $0<a\leq 1$, then the function $g$ is an increasing function on $\mathbb{R}_+$
and inequalities are reversed.
\end{lemma}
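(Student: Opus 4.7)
The plan is to reduce the monotonicity of $g(x)=(f(x))^a/f(ax)$ to a one-line comparison of $(\log f)'$ at the two points $x$ and $ax$. First I would take logarithms and differentiate, applying the chain rule to the denominator, to obtain
\[
\frac{g'(x)}{g(x)} \;=\; a\left(\frac{f'(x)}{f(x)}-\frac{f'(ax)}{f(ax)}\right) \;=\; a\bigl((\log f)'(x)-(\log f)'(ax)\bigr).
\]
The hypothesis that $f$ is log-convex means exactly that $(\log f)'$ is nondecreasing on $\mathbb{R}_+$. Consequently, for $a\geq 1$ and $x>0$ we have $ax\geq x$, hence $(\log f)'(ax)\geq (\log f)'(x)$, so the bracket is nonpositive; since $a>0$ and $g>0$, this forces $g'(x)\leq 0$, giving the desired monotonicity of $g$ on $\mathbb{R}_+$.

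The two particular inequalities then fall out immediately. The left one, $(f(y))^a/f(ay)\leq (f(x))^a/f(ax)$ for $0\leq x\leq y$, is simply the statement $g(y)\leq g(x)$ just obtained. The right one, $(f(x))^a/f(ax)\leq (f(0))^{a-1}$, is the limiting bound $g(x)\leq \lim_{t\to 0^+}g(t)$; evaluating this limit by continuity gives $f(0)^{a}/f(0)=f(0)^{a-1}$, provided one interprets $f(0)$ as the right-hand limit and assumes it is a finite positive number. Finally, for $0<a\leq 1$ the very same identity for $g'/g$ applies, but now $ax\leq x$, so monotonicity of $(\log f)'$ flips the sign of the bracket, yielding $g'\geq 0$; since the role of $x$ and $y$ in the resulting comparison is swapped, both stated inequalities reverse in direction.

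The mechanism of the proof is essentially forced: once one writes $g=\exp(a\log f(x)-\log f(ax))$ the conclusion is a direct corollary of the defining property of log-convexity, so there is no genuine analytic obstacle. The one place requiring care is the boundary constant $(f(0))^{a-1}$: the domain is the open half-line $(0,\infty)$, and the bound is meaningful only under the tacit continuous extension of $f$ to the origin. Verifying that this extension is legitimate in the intended applications (where $f$ is typically $\Gamma$, $\psi$, or a related special function composed with an affine map sending $0$ to a finite point) is the only bookkeeping step worth flagging.
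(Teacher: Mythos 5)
Your argument is correct: taking logarithms gives $g'(x)/g(x)=a\bigl((\log f)'(x)-(\log f)'(ax)\bigr)$, and the monotonicity of $(\log f)'$ supplied by log-convexity settles both cases $a\geq 1$ and $0<a\leq 1$, with the endpoint bound $(f(0))^{a-1}$ obtained as the limit of $g$ at $0$. Note that the paper offers no proof of this lemma at all --- it is imported verbatim as Theorem 2.1 of the cited Neuman reference --- so there is nothing to compare against except the source; your derivation is the standard one, and your flag about the tacit continuous extension of $f$ to $0$ (the lemma's domain is the open half-line $(0,\infty)$, yet the statement invokes $f(0)$ and ``$0\leq x$'') is a legitimate observation about an imprecision in the statement as quoted, not a gap in your proof.
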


\begin{corollary}\label{r2} For $k>1$ and $c<x\leq y$, the following inequality holds
$$\left(\frac{\psi(x)}{\psi(y)}\right)^k\leq \frac{\psi(kx)}{\psi(ky)}.$$
\end{corollary}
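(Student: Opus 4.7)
The plan is to reduce the inequality to Lemma \ref{neu} by applying that lemma not to $\psi$ itself but to $f=1/\psi$ on $(c,\infty)$. First I would observe that $\psi$ is strictly positive on $(c,\infty)$: by Lemma \ref{lem1}(2), $\psi'>0$, so $\psi$ is strictly increasing, and by definition $c$ is its unique positive zero. Hence $f=1/\psi$ is well-defined, positive, and differentiable on $(c,\infty)$.

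The decisive step is to show that $f=1/\psi$ is log-convex on $(c,\infty)$, equivalently that $\psi$ is log-concave there. From
$$(\log\psi)''=\frac{\psi\,\psi''-(\psi')^2}{\psi^2},$$
together with $\psi>0$ and $(\psi')^2\ge 0$, this reduces to verifying $\psi''<0$ on $(c,\infty)$. That in turn follows at once by chaining Lemma \ref{lem1}(3) with Lemma \ref{lem1}(2): $\psi''(x)<1/x-2\psi'(x)<1/x-2(1/x+1/(2x^2))=-1/x-1/x^2<0$.

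With $f=1/\psi$ log-convex on $(c,\infty)$, I would invoke Lemma \ref{neu} with $a=k>1$; the proof of that lemma is purely local, so restricting its hypothesis to the interval $(c,\infty)$ rather than all of $\mathbb{R}_+$ causes no difficulty. The conclusion is that
$$g(x)=\frac{f(x)^k}{f(kx)}=\frac{\psi(kx)}{\psi(x)^k}$$
is decreasing on $(c,\infty)$. For $c<x\le y$, the inequality $g(x)\ge g(y)$ then cross-multiplies, all quantities being positive, to $(\psi(x)/\psi(y))^k\le \psi(kx)/\psi(ky)$.

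The main obstacle is the initial reformulation: one cannot apply Lemma \ref{neu} directly to $\psi$ because on $(c,\infty)$ it is log-concave rather than log-convex, so passing to the reciprocal is essential. Once this switch is made, the verification of $\psi''<0$ from the preliminary lemmas is routine, and the rest of the argument is a direct quotation of Lemma \ref{neu}.
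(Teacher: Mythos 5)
Your proposal is correct and follows essentially the same route as the paper: both establish that $1/\psi$ is log-convex on $(c,\infty)$ and then invoke Lemma \ref{neu} with $a=k$. The only cosmetic difference is that you verify the log-convexity via the clean observation $\psi''<0$ (chaining Lemma \ref{lem1}(3) with \ref{lem1}(2)), whereas the paper expands $(\log(1/\psi))''$ directly and bounds it using the same items of Lemma \ref{lem1}.
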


\begin{proof}
Let $g_1(x)=\log(1/\psi(x))$. Differentiating $g_1$ twice with respect to $x$ we get by Lemma \ref{lem1} 
\begin{eqnarray*}
g''(x)&=&\frac{\psi'(x)^2-\psi(x)\psi(x)^2}{\psi''(x)}\\
&>&\frac{-\psi''(x)(1+\psi(x))}{\psi(x)^2}>0,\\
\end{eqnarray*}
which implies that $g_1$ is convex. Now the rest of proof follows easily from Lemma \ref{neu}.
\end{proof}

\begin{theorem}\label{r5} The function $f(x)={\rm artanh}(\psi(\tanh(x)))$ is strictly
 increasing and concave from $(c,\infty)$ onto $(l,m)$, where
 $$f(c)={\rm artanh}(\psi(\tanh(c)))=-0.9934\ldots=l$$ and
 $$\quad f(\infty)=-{\rm artanh}(\gamma)=-0.6582\ldots=m.$$
In particular,
\begin{enumerate}
\item $\displaystyle\psi\left(\frac{r+s}{1+rs+r's'}\right)> \frac{\psi(r)+\psi(s)}
{1+\psi(r)\psi(s)+\sqrt{1-\psi(r)^2}\sqrt{1-\psi(s)^2}}$,\\
for all $r,s\in(0,1)$,\\

\item $\displaystyle\psi\left(\frac{r+s}{1+rs}\right)> \tanh\left(\frac{1}{2}\,{\rm artanh}\left(\frac{\psi(2r)+\psi(2s)}{1+\psi(2r)\psi(2s)}\right)\right)$,\\
for all $r,s\in(0,1)$,\\

\item $\displaystyle\frac{1+\psi(\tanh(r))}{1-\psi(\tanh(r))}\frac{1-\psi(\tanh(s))}{1+\psi(\tanh(s))}<
e^{2a(r-s)},$\\
for all $r,s\in(c,\infty)$, where
$$f'(c)=\frac{\psi'(\tanh (c)) {\rm sech}^2(c)}{1-\psi(\tanh(c))^2}=0.8807\ldots=a.$$
\end{enumerate}
\end{theorem}

\begin{proof} Differentiating $f$ with respect to $x$, we get
$$f'(x)=\frac{\psi'(\tanh (x)) \text{sech}^2(x)}{1-\psi(\tanh(x))^2}=\frac{F(x)}{G(x)}.$$
We see that $f'$ is positive and decreasing, because
$$F'(x)=\psi ''(\tanh (x)) \text{sech}^4(x)-2 \psi'(\tanh (x))
   \text{sech}^2(x) \tanh (x)<0.$$
Clearly $G(x)$ is increasing, hence $f$ is concave.
The concavity of the function implies that
$$f\left(\frac{x+y}{2}\right)>\frac{f(x)+f(y)}{2}$$
$$\Longleftrightarrow \psi\left(\tanh\left(\frac{x+y}{2}\right)\right)>\tanh\left(\frac{{\rm artanh}(\psi(\tanh(x)))
+{\rm artanh}(\psi(\tanh(y)))}{2}\right).$$
We get (1) by using $$\tanh\left(\frac{x+y}{2}\right)=\frac{\tanh(x+y)}{1+\sqrt{1-\tanh^2(x+y)}}=\frac{r+s}{1+rs+r's'},$$
$$\tanh\left(\frac{u+v}{2}\right)=\frac{R+S}{1+RS+R'S'},$$
and letting $r=\tanh(x),\,s=\tanh(y),\,R=\psi(r),\,S=\psi(s),
\,u={\rm artanh}(\psi(\tanh(x)))$ and $v={\rm artanh}(\psi(\tanh(y)))$.
Let us next consider (2), we obtain
\begin{eqnarray*}
{\rm artanh}\psi\left(\tanh\left(\frac{x+y}{2}\right)\right)&>&\frac{{\rm artanh}(\psi(\tanh(x)))
+{\rm artanh}(\psi(\tanh(y)))}{2}\\
&=&\frac{1}{2}{\rm artanh}\left(\frac{\psi(\tanh(x))+\psi(\tanh(y))}{1+\psi(\tanh(x))\psi(\tanh(y))}\right).
\end{eqnarray*}
Letting $r=\tanh(x/2)$ and $s=\tanh(y/2)$ we get (2).
The derivative $f'(x)$ tends to $a$ when $x$ tends to $c$. By Mean Value Theorem we get
$f(r)-f(s)<a(r-s).$
This is equivalent to
$$\frac{1}{2}\log\left(\frac{1+\psi(\tanh(r))}{1-\psi(\tanh(r))}\right)-\frac{1}{2}
\log\left(\frac{1+\psi(\tanh(s))}{1-\psi(\tanh(s))}\right)<a
(r-s),$$ hence (3) follows, and this completes the proof.
\end{proof}

\begin{lemma}\label{rv}\cite[Thm 1.7]{kmsv} Let $f:\mathbb{R}_+\to \mathbb{R}_+$ be a differentiable function
and for $c_2\neq 0$ define
$$g(x)=\frac{f(x^{c_2})}{(f(x))^{c_2}}\,.$$
We have the following
\begin{enumerate}
\item if $h(x)=\log(f(e^x))$ is a convex function, then $g(x)$ is monotone increasing for $c_2,x\in(0,1)$ or $c_2,x\in(1,\infty)$ or
$c<0,x>1$and monotone decreasing for $c_2\in(0,1), x>1$  or $c_2>1,\,x\in(0,1)$ or $c_2<0,x\in(0,1)$,\\
\item if $h(x)$ is a concave function, then $g(x)$ is monotone increasing for
 $c_2\in(0,1),x>1$ or $c_2>1,\,x\in(0,1)$ or $c_2<0,\,x\in(0,1)$
and monotone decreasing for $c_2,x\in(0,1)$ or $c_2>1,x>1$ or $c_2<0,\,x>1$.
\end{enumerate}
\end{lemma}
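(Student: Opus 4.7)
The plan is to reduce this two-variable monotonicity statement about $g$ to the monotonicity of a single function of one variable, using a logarithmic change of variable that precisely manufactures the function $h$ appearing in the hypothesis. First I would set $G(t)=\log g(e^t)$. A direct computation gives
$$G(t)=\log f(e^{ct})-c\log f(e^t)=h(ct)-ch(t).$$
Since both $\log$ and $t\mapsto e^t$ are strictly increasing, $g$ is monotone in $x\in\mathbb{R}_+$ in the same sense as $G$ is monotone in $t\in\mathbb{R}$; the range $x\in(0,1)$ corresponds to $t<0$ and $x\in(1,\infty)$ to $t>0$, which is how the three intervals in the statement will be produced.

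Second, I would differentiate to obtain
$$G'(t)=c\bigl(h'(ct)-h'(t)\bigr).$$
If $h$ is convex, then $h'$ is non-decreasing, so $h'(ct)-h'(t)$ carries the same sign as $ct-t=(c-1)t$. Hence the sign of $G'(t)$ equals that of the product $c(c-1)t$, and a routine case analysis over the triples of signs of $c$, $c-1$, and $t$ produces exactly the six subcases of part (1). For part (2), if $h$ is concave then $h'$ is non-increasing, so $h'(ct)-h'(t)$ carries the opposite sign, flipping $G'(t)$ throughout and reversing each monotonicity conclusion. Thus (2) follows from (1) without any new work.

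The hard part is not any single computation but the bookkeeping — tracking, in each of the six subcases, on which side of $t$ the point $ct$ lies (in particular noting that for $c<0$ it falls on the opposite side of the origin from $t$) and then translating monotonicity of $G$ in $t$ back into monotonicity of $g$ in $x$ via $x=e^t$. Once the six sign combinations are laid out systematically, both parts of the lemma drop out simultaneously, and the only structural ingredient used is the equivalence between convexity of $h$ and monotonicity of $h'$.
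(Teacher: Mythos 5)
Your argument is correct: the substitution $G(t)=\log g(e^t)=h(ct)-ch(t)$ gives $G'(t)=c\left(h'(ct)-h'(t)\right)$, whose sign under convexity of $h$ is that of $c(c-1)t$, and the six sign combinations reproduce exactly the cases listed (with part (2) obtained by reversal). Note that the paper itself offers no proof to compare against --- Lemma \ref{rv} is imported verbatim from \cite[Thm 1.7]{kmsv} --- but your proof is the standard one and is essentially the argument given in that cited source.
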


\begin{lemma}\label{lem0}\cite[Lemma 2.1]{bari} Let us consider the function $f : (a,\infty)\in\mathbb{R}$, where $a\geq 0$. If the
function $g$, defined by
$$g(x) =\frac{f(x) - 1}{x}$$
is increasing on $(a,\infty)$, then for the function $h$, defined by $h(x) = f(x^2)$, we
have the following Gr\"unbaum-type inequality
\begin{equation}\label{lemm01}
1 + h(z) \geq  h(x) + h(y),
\end{equation}
where $x, y \geq a$ and $z^2 = x^2+y^2$. If the function $g$ is decreasing, then inequality
(\ref{lemm01}) is reversed.

\end{lemma}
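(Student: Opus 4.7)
The plan is to translate the Grünbaum-type inequality \eqref{lemm01} into a statement purely about $f$, and then extract it from the monotonicity of $g$ by two applications of that monotonicity followed by a clean addition.

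First, I would make the substitution $u = x^2$ and $v = y^2$, so that $z^2 = u+v$. With $h(x) = f(u)$, $h(y) = f(v)$, $h(z) = f(u+v)$, the target \eqref{lemm01} becomes the equivalent statement
$$1 + f(u+v) \geq f(u) + f(v),$$
for admissible $u,v>0$ with $u+v$ in the domain of $f$.

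Next, I would invoke the hypothesis that $g(t) = (f(t)-1)/t$ is increasing. Since $u \leq u+v$ and $v \leq u+v$, the inequalities $g(u) \leq g(u+v)$ and $g(v) \leq g(u+v)$ translate, after clearing denominators, into
$$(u+v)(f(u)-1) \leq u(f(u+v)-1), \qquad (u+v)(f(v)-1) \leq v(f(u+v)-1).$$
The key algebraic step is to add these: the right-hand sides collapse to $(u+v)(f(u+v)-1)$, while the left-hand sides collapse to $(u+v)(f(u)+f(v)-2)$. Dividing by $u+v>0$ yields exactly the reformulated inequality. For the decreasing case every inequality reverses, and the same computation produces the reversed Grünbaum inequality.

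I do not expect a serious obstacle here, since the lemma is essentially an exercise in rewriting. The only mildly clever point is noticing that pairing $(u,u+v)$ together with $(v,u+v)$ is the correct way to apply the monotonicity hypothesis so that, after summing, the coefficients on both sides conspire to a common factor of $u+v$ that can be cancelled. Boundary/degenerate cases (such as $u=v$, or parameters at the edge of the domain) either coincide with the interior argument or follow by continuity, and so need no separate treatment.
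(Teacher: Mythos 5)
Your argument is correct: the substitution $u=x^2$, $v=y^2$ reduces \eqref{lemm01} to $1+f(u+v)\geq f(u)+f(v)$, and applying the monotonicity of $g$ to the pairs $(u,u+v)$ and $(v,u+v)$, clearing denominators, and adding gives exactly this after cancelling the common factor $u+v>0$. Note that the paper itself offers no proof of this lemma --- it is quoted from Baricz \cite[Lemma 2.1]{bari} --- and your argument is precisely the standard one given there, so there is nothing to compare beyond observing that your reconstruction is the intended proof. The only caveat, inherited from the statement rather than introduced by you, is the slight mismatch between the stated condition $x,y\geq a$ and what is actually needed for $g(x^2)$ and $g(y^2)$ to be defined, namely $x^2,y^2>a$; your continuity remark handles the genuine boundary cases.
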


\begin{theorem}\label{r3} The following inequalities hold for $r,s \in (c,\infty)$,
\begin{enumerate}
\item $\psi(\sqrt{rs})\geq \sqrt{\psi(r)\psi(s)},$\\
equality holds with $r=s$,
\item $\psi(x^k)<\psi(r)^k,\quad k\in(0,1),$
\item $\psi(r)^k<\psi(r^k),\quad k>1$.
\item $\displaystyle\frac{r+s+\psi(r+s)}{r\,\psi(s)+s\,\psi(r)}
\geq \frac{r+s}{r\,s},\quad r,s\in(c,\infty)$
\end{enumerate}
\end{theorem}

\begin{proof}
Let $f(x)=\log(\psi(e^x)),\,x>t=0.379554$, where $t$ is the solution of the equation $e^t=c$.
Differentiating $f$ with respect to $x$ we get by Lemma \ref{lem1} 
\begin{eqnarray*}
f''(x)&=&\frac{e^x \left(\psi \left(e^x\right) \left(\psi'\left(e^x\right)+e^x \psi ''\left(e^x\right)\right)
-e^x \psi '\left(e^x\right)^2\right)}{\psi\left(e^x\right)^2}\\
&<&\frac{e^x \left(\psi \left(e^x\right) \left(\psi'\left(e^x\right)+1-2e^x\psi'(e^x)\right)
-e^x \psi '\left(e^x\right)^2\right)}{\psi\left(e^x\right)^2}\\
&=&\frac{e^x \left(\psi \left(e^x\right) \left(1-(2e^x-1)(1+1/(2e^{2x}))\right)
-e^x \psi '\left(e^x\right)^2\right)}{\psi\left(e^x\right)^2}<0.
\end{eqnarray*}
Hence $f$ is concave, this implies that
$$\frac{\log(\psi(e^x))+\log(\psi(e^y))}{2}\leq \log(\psi(e^{(x+y)/2})).$$
If we let $r=e^x$ and $s=e^y$ we get (1).
The proofs of part (2) and (3) follow from Lemma \ref{rv}(2).
For the proof of part (4), let $$f_1(x)=\frac{\psi(x)/x-1}{x},\quad x>c.$$ Differentiating with respect $x$ we get
\begin{eqnarray*}
f'_1(x)&=&\frac{\psi'(x) x+x-2 \psi(x)}{x^3}\\
&>&\frac{2 x^2+2 x-4 x\log (x)+3}{2 x^4}>0,
\end{eqnarray*}
by Lemma \ref{lem1}(1) and (2). Now the part (4) follows from Lemma \ref{rv}(2).
\end{proof}

\end{document}